\documentclass[11pt]{article}

\usepackage[T1]{fontenc}
\usepackage[utf8]{inputenc}
\usepackage[margin=1in]{geometry}
\usepackage{amsmath,amssymb,amsfonts}
\usepackage{amsthm}
\usepackage{mathrsfs}
\usepackage{graphicx}
\usepackage{booktabs}
\usepackage{xcolor}
\usepackage{textcomp}
\usepackage{enumitem}
\usepackage[title]{appendix}
\usepackage[numbers,sort&compress]{natbib}
\usepackage{url}
\usepackage[colorlinks=true,linkcolor=blue,citecolor=blue,urlcolor=blue]{hyperref}

\DeclareUnicodeCharacter{00A0}{~}
\DeclareUnicodeCharacter{2011}{-}
\DeclareUnicodeCharacter{2013}{--}
\DeclareUnicodeCharacter{2014}{---}
\DeclareUnicodeCharacter{2212}{-}

\newcommand{\R}{\mathbb{R}}
\newcommand{\CC}{\mathbb{C}}
\newcommand{\eps}{\varepsilon}
\newcommand{\dd}{\,d}
\DeclareMathOperator{\dist}{dist}

\newcommand{\backmatter}{}
\newcommand{\bmhead}[1]{\section*{#1}}

\newtheoremstyle{thmstyleone}
  {3pt}{3pt}{\itshape}{}{\bfseries}{.}{.5em}{}
\newtheoremstyle{thmstyletwo}
  {3pt}{3pt}{\normalfont}{}{\bfseries}{.}{.5em}{}
\newtheoremstyle{thmstylethree}
  {3pt}{3pt}{\normalfont}{}{\bfseries}{.}{.5em}{}

\theoremstyle{thmstyleone}
\newtheorem{theorem}{Theorem}
\newtheorem{proposition}[theorem]{Proposition}
\newtheorem{lemma}[theorem]{Lemma}
\newtheorem{corollary}[theorem]{Corollary}

\theoremstyle{thmstyletwo}

\newtheorem{assumption}[theorem]{Assumption}

\newtheorem{remark}{Remark}

\theoremstyle{thmstylethree}
\newtheorem{definition}{Definition}

\title{A Priori Integral Persistent Excitation in Conservative Polynomial ODEs with Higher-Order Interactions}
\author{Aleksandr Semenov\thanks{Control of complex systems, Institute for problems in mechanical engineering of the Russian Academy of Science, Boljshoy prospekt V.O., Saint Petersburg 199178, Russia. Email: \texttt{aleksandr.semenov.dm@gmail.com}}
\and Alexander Fradkov\thanks{Control of complex systems, Institute for problems in mechanical engineering of the Russian Academy of Science, Boljshoy prospekt V.O., Saint Petersburg 199178, Russia. Email: \texttt{fradkov@mail.ru}}\thanks{These authors contributed equally to this work.}}
\date{}

\begin{document}
\maketitle

\begin{abstract}
The paper proposes an approach for verifying integral persistent excitation, which is important in problems of parameter identification and adaptive control in nonlinear dynamical systems. The approach works for conservative polynomial ODEs a priori without knowledge of the parameters. Rigorous proofs of the corresponding theorems are provided. An example of a nonlinear dynamical system with higher-order interactions and the application of the proposed method to it are analyzed. The proof of the main result is formalized in the Lean formal verification language.
\end{abstract}

\noindent\textbf{Keywords:} persistent excitation, complex systems, parameters estimation, three-wave interactions

\section{Introduction}

In physical problems, as well as in problems of chemistry, mechanics, biology, and economics, it is necessary to estimate parameters of dynamical systems \cite{FradkovMiroshnikNikiforov1999}. This is precisely the main question of parametric identification theory. Nevertheless, parameter estimation algorithms are often invented outside the cybernetics journals, for example, in physics papers \cite{KralemannCimponeriuRosenblumPikovskyMrowka2007,GoelMaitraMontroll1971}, whenever a corresponding need arises.

Besides inventing a method for parameter estimation, it is important to be certain that the obtained estimates are accurate and correct. The performance criterion for identification algorithms is usually the persistent excitation (PE) condition and its various modifications: weak PE, interval excitation, etc. \cite{WangEfimovBobtsov2020,NarendraAnnaswamy1987,PanteleyLoriaTeel2001,AeyelsSepulchre1994}.

However, a paradox arises here. First, the PE condition is not easy to prove even for known parameter values, because one needs to verify an inequality for the integral of a function along the trajectory. Second, verifying the PE condition is of interest before the parameters are identified; consequently, one has to work with a system where the parameters are unknown. Coping with these difficulties is hard, and often even in very strong papers the PE condition is introduced as an assumption \cite{RybalkoFradkov2023,XingNaCostaCastelloGao2020,KralemannCimponeriuRosenblumPikovskyMrowka2008} or the question of identifiability is not raised \cite{KralemannCimponeriuRosenblumPikovskyMrowka2007}. \emph{A priori} ways to verify the PE condition are poorly developed \cite{PadoanScarciottiAstolfi2017}, and publications on this topic are rare \cite{PadoanScarciottiAstolfi2017,SemenovFradkov2024,SemenovFradkov2025}. The aim of the present paper is to propose a method to solve these problems for a certain class of systems.

When identifying parameters of complex autonomous nonlinear systems with higher-order interactions, the question becomes especially acute. In classical approaches, PE is often provided by an input signal (for instance, a sum of sinusoids is fed into the input); however, for a complex system such an intervention may be too crude, especially since the influence of external inputs on systems with higher-order interactions cannot be considered completely understood \cite{jafari2024synchronization}. For example, for ecological systems such an approach would mean regular mass culling of animals alternating with their abundant feeding.

In the present paper, we restrict ourselves to conservative dynamical systems governed by polynomial ODEs. Furthermore, for simplicity and clarity of exposition (similarly to \cite{PadoanScarciottiAstolfi2017}) we will consider the PE condition for the state variable vector.

The paper is organized as follows. Section \ref{sec:preliminaries} introduces affine integral excitation and its scalar form. Section \ref{sec:problem} formulates the problem setting for a polynomial vector field on a compact invariant set. Section \ref{sec:main_result_2} presents the main result on the absence of sticking to hyperplanes and affine IPE. Sections \ref{sec:triad_model_2} and \ref{sec:triad_application_2} consider a resonant triad model and the application of the main theorem to it. Section \ref{sec:discussion_2} Concludes the article and is devoted to the development of work ideas. The proof of the main theorem is deferred to the appendix.

\section{Preliminaries}
\label{sec:preliminaries}

\subsection{Affine Integral Excitation}

\begin{definition}[Affine integral excitation]
Let $z:[0,\infty)\to\R^n$ be a measurable bounded trajectory. Define the affine regressor as the column
\[
    \phi_z(t)=\begin{bmatrix} z(t) \\ 1 \end{bmatrix}\in\R^{n+1}.
\]
We call the trajectory $z(t)$ \emph{affinely integrally exciting} if there exists a number $\alpha>0$ such that
\begin{equation}\label{eq:ipe_definition_2}
    \liminf_{T\to\infty}
    \frac1T\int_0^T \phi_z(t)\phi_z(t)^\top\dd t
    > \alpha I_{n+1}>0.
\end{equation}

\end{definition}

\begin{lemma}[Scalar form of affine IPE]
\label{lem:scalar_ipe_equivalence_2}
Condition \eqref{eq:ipe_definition_2} is equivalent to the following property: there exists a number $\alpha>0$ such that for any affine hyperplane
\[
    a\cdot x+b=0,
    \qquad \|a\|^2+b^2=1, \quad a \in \R^n , b \in \R
\]
the estimate
\begin{equation}\label{eq:ipe_scalar_2}
    \liminf_{T\to\infty}
    \frac1T\int_0^T |a\cdot z(t)+b|^2\dd t
    >\alpha
\end{equation}
holds.

\end{lemma}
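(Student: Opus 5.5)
The proof amounts to rewriting the quadratic form of the averaged Gram matrix in the direction $v=(a,b)$. Put
\[
M_T=\frac1T\int_0^T\phi_z(t)\phi_z(t)^\top\dd t .
\]
For any unit vector $v=(a,b)\in\R^{n+1}$, i.e.\ $\|a\|^2+b^2=1$, we have $v^\top\phi_z(t)=a\cdot z(t)+b$. Hence
\[
v^\top M_T v=\frac1T\int_0^T |a\cdot z(t)+b|^2\dd t .
\]
The unit sphere in $\R^{n+1}$ is exactly the set of normalized hyperplane coefficients appearing in the lemma.

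First I fix how the $\liminf$ of a matrix family in \eqref{eq:ipe_definition_2} is read. Since $z$ is bounded, the entries of $M_T$ are bounded uniformly in $T$. I read \eqref{eq:ipe_definition_2} as saying that there is $T_0$ with $M_T\ge \alpha' I$ for all $T\ge T_0$, for some $\alpha'>\alpha$. Equivalently, the infimum over unit $v$ of $\liminf_T v^\top M_T v$ exceeds $\alpha$, uniformly in $v$.

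The direction \eqref{eq:ipe_definition_2} $\Rightarrow$ \eqref{eq:ipe_scalar_2} is then immediate. For every unit $v$ and every $T\ge T_0$ we get $v^\top M_T v\ge\alpha'$, so the $\liminf$ in \eqref{eq:ipe_scalar_2} is at least $\alpha'>\alpha$.

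The converse is the main obstacle, because \eqref{eq:ipe_scalar_2} holds pointwise in $v$ and must be upgraded to a bound uniform in $v$. I would argue by compactness, exploiting that the maps $v\mapsto v^\top M_T v$ are equi-Lipschitz on the sphere. Let $\|z(t)\|\le C$. Then $\|\phi_z(t)\|\le \sqrt{C^2+1}=:K$ and $\|M_T\|\le K^2$. For unit $v,w$,
\[
|v^\top M_T v-w^\top M_T w|\le 2K^2\|v-w\| .
\]
Under the paper's reading that \eqref{eq:ipe_scalar_2} holds with one $\alpha$ and strict inequality for every hyperplane, I would proceed in three steps.
\begin{enumerate}[label=(\roman*)]
\item Replace $\alpha$ by a slightly smaller $\alpha_1<\alpha$, so that $\liminf_T v^\top M_T v>\alpha_1+\delta$ for every $v$, where $\delta=(\alpha-\alpha_1)$.
\item Cover the sphere by a finite $\delta/(4K^2)$-net $\{v_i\}$, and choose $T_0$ such that $v_i^\top M_T v_i>\alpha_1+\delta/2$ for all $i$ and all $T\ge T_0$. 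This is possible because the net is finite.
\item Conclude via the Lipschitz bound that $v^\top M_T v>\alpha_1$ for all unit $v$ and all $T\ge T_0$, i.e.\ $M_T\ge\alpha_1 I$ eventually.
\end{enumerate}
This yields \eqref{eq:ipe_definition_2} with constant $\alpha_1$ in place of $\alpha$. That suffices, since the lemma only asserts the existence of some positive constant, so relabeling $\alpha$ is harmless. The point to be careful about is keeping the strict inequalities and the change of constant explicit.
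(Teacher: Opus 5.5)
Your proof is correct and rests on the same identity as the paper's, $u^\top G_Tu=\frac1T\int_0^T|a\cdot z(t)+b|^2\,dt$, and your forward direction is identical to the paper's. The difference lies in how the matrix $\liminf$ in \eqref{eq:ipe_definition_2} is read in the converse.

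The paper takes it ``in the sense of quadratic forms'', effectively as $\liminf_T u^\top G_Tu>\alpha$ for each unit $u$ separately. Under that reading its converse is a one-line restatement, and it never exchanges $\liminf_T$ with $\inf_u$. You adopt the stronger uniform reading $\liminf_T\lambda_{\min}(M_T)>\alpha$, which is the one actually used in PE-based convergence arguments. You correctly see that the real content is upgrading pointwise-in-$v$ bounds to uniform ones. You settle this with the equi-Lipschitz bound $|v^\top M_Tv-w^\top M_Tw|\le2K^2\|v-w\|$, which is where boundedness of $z$ is genuinely used, together with a finite net. This shows that for bounded $z$ the two readings coincide up to the constant, which the paper's argument does not establish.

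Two bookkeeping points. First, step (iii) yields $v^\top M_Tv>\alpha_1$ for each $T\ge T_0$. That gives only $\liminf_T\lambda_{\min}(M_T)\ge\alpha_1$, not the strict margin your reading demands. Either use a $\delta/(8K^2)$-net, which leaves a margin $\delta/4$, or lower the constant once more; this is harmless since $\alpha$ is existential. Second, the poles $(0,\pm1)$ of the sphere do not define hyperplanes, so the hypothesis does not literally apply there. Pick the net points with $a\neq0$ and let the Lipschitz step cover the poles.
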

Geometrically, this condition means that the trajectory does not stick to any affine hyperplane as $t \rightarrow \infty$.

Affine IPE and its consequences are sufficient conditions for identifiability of constant parameters in many parameter estimation and adaptive control algorithms \cite{AeyelsSepulchre1994,NarendraAnnaswamy1987}.

\section{Problem Statement}
\label{sec:problem}

Consider the system
\begin{equation}\label{eq:polynomial_system_2}
    \dot x=f_\xi(x),\qquad x\in\R^n,
    \quad \xi\in\Xi\subset\R^m,
\end{equation}
where the components of $f_\xi$ are polynomials in $x$ and the parameters enter the right‑hand side linearly:
\begin{equation}\label{eq:linear_parameterization_2}
    \dot{x}_i=\sum_{j}\xi_{ij}p_{ij}(x).
\end{equation}
Here $p_{ij}(x)$ are monomials. 

This formulation covers many models of interest, including generalized Lotka--Volterra system, oscillator networks, resonant cascades, and other complex systems.

In the sequel, we will drop the subscript $\xi$ wherever this causes no confusion and it is clear from the context which parameter value is meant; in particular, we will write $f$, $K$, $\mu$, $\Phi^t$, $M$, and $Q$ instead of $f_\xi$, $K_\xi$, $\mu_\xi$, $\Phi_\xi^t$, $M_\xi$, and $Q_\xi$.

\begin{assumption}[Conditions]
\label{ass:main_2}
Let $K\subset\R^n$ be a compact invariant set of system \eqref{eq:polynomial_system_2}. Assume that the following conditions hold.
\begin{enumerate}[label=(A\arabic*)]
    \item $\Phi^t(K) \subset K$ for all $t\in\R$.
    \item On $K$ there exists an invariant probability measure $\mu$ that is absolutely continuous with respect to the Lebesgue measure, and
    \[
        d\mu=\rho(x)\dd x,
        \qquad
        0<c\le \rho(x)\le C<\infty
        \quad \text{on }K.
    \]
    \item For the matrix
    \[
        M(x)=\bigl[f(x)\mid L_f f(x)\mid\dots\mid L_f^{n-1}f(x)\bigr]
    \]
    the polynomial
    \[
        Q(x)=\det M(x) \not\equiv 0
    \]
    is nondegenerate: there exists a point $x_*\in K$ such that $Q(x_*)\ne0$.
\end{enumerate}
\end{assumption}

Notice that this list of conditions is not overly restrictive. In particular, any Hamiltonian system satisfies the first two conditions. Indeed, let $W(x)$ be the Hamiltonian of the system; then the set $K = \{x:c_1\le W(x) \le c_2\}$, where $c_1,c_2 > 0$, is compact and invariant. By Liouville's theorem, a Hamiltonian system preserves the Lebesgue measure. The third condition is technical and can usually be guaranteed for almost all parameter values. For an easy check, the following statement can be applied.

\begin{proposition}[Checking nondegeneracy for almost all parameters]
\label{prop:param_nondegeneracy_2}
Consider the family \eqref{eq:polynomial_system_2}--\eqref{eq:linear_parameterization_2} and fix a point $x_*\in\R^n$. If there exists a parameter $\xi^0\in\Xi$ such that
\[
    Q_{\xi^0}(x_*)\ne0,
\]
then the set of parameters
\[
    \Sigma_{x_*}=\{\xi\in\Xi:Q_\xi(x_*)=0\}
\]
has zero Lebesgue measure in $\R^m$.
\end{proposition}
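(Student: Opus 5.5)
The plan is to show that, for the fixed point $x_*$, the map $\xi\mapsto Q_\xi(x_*)$ is a polynomial in the parameter vector $\xi\in\R^m$. By hypothesis this polynomial is not identically zero, since it is nonzero at $\xi^0$. The claim then follows from the standard fact that the zero set of a nonzero real polynomial on $\R^m$ has Lebesgue measure zero. Since $\Sigma_{x_*}\subset\{\xi\in\R^m: Q_\xi(x_*)=0\}$, it is contained in a null set, so it has zero Lebesgue measure in $\R^m$ (measurability is automatic because the set is closed relative to $\Xi$, and in any case Lebesgue measure is complete).

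\textbf{Step 1: polynomiality in $\xi$.} By \eqref{eq:linear_parameterization_2}, each component of $f_\xi(x)$ is a polynomial jointly in $(x,\xi)$, linear in $\xi$. The Lie derivative $L_{f}g=(Dg)f$ maps polynomials in $(x,\xi)$ to polynomials in $(x,\xi)$, because differentiation in $x$ preserves polynomiality and multiplication by $f_\xi$ adds one degree in $\xi$. By induction, $L_{f_\xi}^k f_\xi(x)$ is a polynomial in $(x,\xi)$ of degree at most $k+1$ in $\xi$. Every column of $M_\xi(x)$ is therefore polynomial in $(x,\xi)$, and $Q_\xi(x)=\det M_\xi(x)$, being a polynomial in the entries, is a polynomial in $(x,\xi)$. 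Fixing $x=x_*$ gives a polynomial $P(\xi)=Q_\xi(x_*)$ on all of $\R^m$, naturally extended beyond $\Xi$.

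\textbf{Step 2: zero sets of nonzero polynomials are null.} $P\not\equiv0$ because $P(\xi^0)\ne0$. I would prove by induction on $m$ that $Z(P)=\{P=0\}$ has measure zero.
\begin{itemize}
\item For $m=1$, a nonzero polynomial has finitely many roots.
\item For the inductive step, write $P(\xi)=\sum_{k=0}^d c_k(\xi')\xi_m^k$ with $\xi'=(\xi_1,\dots,\xi_{m-1})$ and some $c_k\not\equiv0$. By induction, the set $N=\{\xi': c_k(\xi')=0\}$ is null in $\R^{m-1}$. For $\xi'\notin N$, the slice $\{\xi_m: P(\xi',\xi_m)=0\}$ is finite and hence null.
\item Fubini–Tonelli applied to the indicator of the closed set $Z(P)$ then gives $|Z(P)|=0$.
\end{itemize}

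There is no real obstacle. The only point needing care is Step 1: checking that the iterated Lie derivatives remain polynomial in $\xi$ (and not just in $x$), so that the determinant is a genuine polynomial in the parameters. Once that bookkeeping is done, the measure-zero statement is classical.
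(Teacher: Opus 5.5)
Your proposal is correct and takes the same approach as the paper. Both arguments use the linear parametrization to show that the iterated Lie derivatives, and hence $Q_\xi(x_*)$, are polynomial in $\xi$, and then use the fact that a polynomial not vanishing at $\xi^0$ has a Lebesgue-null zero set. The only difference is that you prove this classical fact by induction and Fubini–Tonelli and handle measurability of $\Sigma_{x_*}$ via completeness of Lebesgue measure, whereas the paper simply cites the fact.
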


It is proposed to give answers further in measure‑theoretic terms: “for almost all”, “up to a set of measure zero”, and the like. The reason is that for any somewhat complicated nonlinear system one can usually choose parameters in a certain degenerate way so that some conclusion fails. Declaring the parameters unknown, one cannot completely exclude degenerate cases (without adding a large number of complicated, hard‑to‑verify specific conditions); the best one can claim is that the chance of these “bad” cases is zero.

Thus, we formulate the main question of the paper as follows: suppose Assumption \ref{ass:main_2} hold for system \ref{eq:polynomial_system_2}; is it true that for $\mu$-almost every initial condition $x\in K$ the trajectory $t\mapsto\Phi^t(x)$ is affinely integrally exciting?

\section{Main Result}
\label{sec:main_result_2}

 The answer to the question of the previous section turns out to be positive.

For an affine hyperplane $H\subset\R^n$ set
\[
    U_\eps(H)=\{y\in K:\dist(y,H)<\eps\}.
\]

\begin{theorem}[Absence of sticking to hyperplanes and affine IPE]
\label{thm:main_nonsticking_2}
Let system \ref{eq:polynomial_system_2} satisfy Assumption \ref{ass:main_2}. Then there exists an invariant set $X_0\subset K$ of full measure, $\mu(X_0)=1$, such that for every $x\in X_0$ and every affine hyperplane $H\subset\R^n$ the fraction of time spent in the $\epsilon$-neighborhood of the hyperplane $U_\eps(H)$ tends to zero:
\begin{equation}\label{eq:main_nonsticking_2}
    \lim_{\eps\to0}\limsup_{T\to\infty}
    \frac1T\int_0^T\mathbf 1_{U_\eps(H)}(\Phi^t x)\dd t=0.
\end{equation}

\end{theorem}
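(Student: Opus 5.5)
We will combine the Birkhoff ergodic theorem with two facts: $\mu$ gives zero mass to every hyperplane, and this holds uniformly over hyperplanes.

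\textbf{Step 1 (time averages).} By (A1)–(A2), $\mu$ is an invariant probability measure for the flow on the compact set $K$. The Birkhoff ergodic theorem (no ergodicity needed) gives the following. For every $g\in L^1(\mu)$ there is a full-measure invariant set on which $\frac1T\int_0^T g(\Phi^t x)\dd t\to \bar g(x)$. Here $\bar g$ is invariant and $\int\bar g\dd\mu=\int g\dd\mu$.

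To make $X_0$ independent of $H$, we restrict to a countable family. Take all hyperplanes $H_{a,b}=\{a\cdot y+b=0\}$ with $(a,b)\in\mathbb{Q}^{n+1}$, $a\ne0$, and all rational $\eps>0$. Use continuous cutoffs $g_{a,b,\eps}(y)=\psi\bigl(\dist(y,H_{a,b})/\eps\bigr)$, where $\psi=1$ on $[0,1]$, $\psi=0$ on $[2,\infty)$, and $0\le\psi\le1$. Let $X_0$ be the intersection of the countably many Birkhoff sets. It is invariant and has full measure.

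\textbf{Step 2 (reducing an arbitrary $H$ to the countable family).} Since $K$ is bounded, the following holds for any $H=\{a\cdot y+b=0\}$ with $\|a\|=1$ and any $\eps$. There is a rational hyperplane $H'$ with $U_\eps(H)\subset U_{2\eps}(H')$ on $K$, obtained by perturbing $(a,b)$ by $O(\eps/\operatorname{diam}K)$. Hence for $x\in X_0$,
\[
\limsup_{T\to\infty}\frac1T\int_0^T\mathbf 1_{U_\eps(H)}(\Phi^tx)\dd t\le \overline{g_{H',2\eps}}(x).
\]
It therefore suffices to show that $\sup_{H'}\overline{g_{H',2\eps}}(x)\to0$ as $\eps\to0$, for $\mu$-a.e.\ $x$. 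This passage from the countable family to all $H$ is where care is needed.

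\textbf{Step 3 (main obstacle: smallness of $\bar g$ pointwise, not just on average).} Uniformly in $H$,
\[
\int \bar g\dd\mu=\int g\dd\mu\le \mu(U_{2\eps}(H))\le C\,\mathrm{Leb}\bigl(U_{2\eps}(H)\cap K\bigr)\le C'\eps ,
\]
using the upper bound $\rho\le C$ and boundedness of $K$. This gives smallness only in $L^1$. Pointwise control requires the ergodic decomposition. For $\mu$-a.e.\ $x$, $\bar g(x)=\int g\dd\mu_x$, where $\mu_x$ is the ergodic component through $x$. So we must show that a.e.\ ergodic component gives no mass to any hyperplane.

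Suppose instead that a positive-$\mu$ set of components $\mu_x$ charge some hyperplane $H_x$. By invariance and ergodicity, a component $\mu_x$ then charges $\Phi^{-t}H_x$ for all $t$. Its support is contained in an invariant set $S$, and one argues that $\mu_x$ is supported in $H_x$ itself. This is the step I would try first: use the invariance of $\mu_x$ under the flow and decompose $\mu_x$ along the hyperplane.

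Then the orbit of $\mu_x$-typical points stays in $H_x$. Differentiating $a\cdot\Phi^t y+b\equiv0$ gives $a\cdot L_f^k f(y)=0$ for $k=0,\dots,n-1$, that is, $a^\top M(y)=0$, so $Q(y)=0$. Thus such components live in the algebraic set $Z=\{Q=0\}$. Since $Q\not\equiv0$ by (A3), $Z$ has Lebesgue measure zero, hence $\mu(Z)=0$ by (A2). Therefore only a $\mu$-null set of components can charge hyperplanes.

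For the remaining $x$, the set $\{U_{2\eps}(H)\}$ is compact in the parameter $(a,b)$ (with $b$ bounded, since $K$ is bounded). Each $\mu_x(U_{2\eps}(H))$ decreases to $\mu_x(H)=0$ as $\eps\to0$. A Dini-type argument, using upper semicontinuity in $(a,b)$ of $\mu_x(\overline{U_{2\eps}(H)})$, gives $\sup_H\mu_x(U_{2\eps}(H))\to0$. Combined with Step 2 this yields \eqref{eq:main_nonsticking_2}.
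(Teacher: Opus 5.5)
Your overall architecture is genuinely different from the paper's. The paper never uses the ergodic decomposition. It argues pathwise: Birkhoff for the sets $W_\delta=\{|Q|<\delta\}$ (Step 0), then a uniform derivative-separation lemma off $W_\delta$, an exit-time lemma, and Novikov--Yakovenko bounds on crossings of $\{l=\pm\eps\}$, assembled block by block. Your outer shell is fine: the countable rational family, the approximation $U_\eps(H)\subset U_{2\eps}(H')$, $\bar g=\int g\,d\mu_x$ a.e., and $\mu(Z)=\int\mu_x(Z)\,d\mu(x)=0$.

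\textbf{The gap} is at the crux of Step 3. You assert that an ergodic component with $\mu_x(H_x)>0$ is supported in $H_x$, and propose to get this from invariance, ergodicity and ``decomposing $\mu_x$ along the hyperplane''. Those properties cannot give it. Take a $C^\infty$ planar vector field having as a periodic orbit a smooth closed curve that contains a straight segment of a line $H$ (a smoothed stadium). The time-normalized measure on that orbit is invariant, ergodic and charges $H$, yet is not supported in $H$. So the polynomial structure must enter exactly at this step, and as written there is no argument there.

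\textbf{The missing ingredient} is the observation behind the paper's Lemma~\ref{lem:sep}. Let $t_0$ be a time with $\Phi^{t_0}y\in H\setminus Z$. Then $a^\top M(\Phi^{t_0}y)\neq0$, so $h(t)=l(\Phi^t y)$ has a nonzero derivative of some order $1\le k\le n$ at $t_0$, and $t_0$ is an isolated zero of $h$. Hence $\{t:\Phi^t y\in H\setminus Z\}$ is countable for every $y$. By invariance and Fubini, $\nu(H\setminus Z)=\int\frac1T\int_0^T\mathbf 1_{H\setminus Z}(\Phi^t y)\,dt\,d\nu(y)=0$ for \emph{every} invariant $\nu$. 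Alternatively, real-analyticity of $h$ shows that an orbit spending positive time in $H$ lies entirely in $H$, which is what your ``supported in $H_x$'' actually needs.

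Thus $\mu_x(H)\le\mu_x(Z)$ for all $H$ simultaneously. Shrinking $X_0$ to the invariant full-measure set $\{\mu_x(Z)=0\}$ finishes the proof; this condition is essentially the paper's Step~0 in measure language. Ergodicity of the components is then unnecessary.

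The Dini argument is also unnecessary. For fixed $H$, pick $H'$ with $|\dist(\cdot,H)-\dist(\cdot,H')|<\eps$ on $K$. Then $\int g_{H',2\eps}\,d\mu_x\le\mu_x(U_{5\eps}(H))\to\mu_x(H)=0$.

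With this patch your route is a valid and much softer alternative to the paper's. It replaces the exit-time lemma and the Novikov--Yakovenko counting by the qualitative fact that no invariant measure charges $H\setminus Z$. The cost is invoking the ergodic decomposition, or weak-$*$ limits of empirical measures along generic orbits.
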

\begin{proof}
    The proof of the theorem is given in the Appendix. For simplicity of verification, a reference to a Lean formalization of this proof is also provided there.
\end{proof}
\begin{corollary}
   Consequently, for every $x\in X_0$ the trajectory $t\mapsto\Phi^t(x)$ is affinely integrally exciting:
\begin{equation}\label{eq:main_ipe_2}
    \liminf_{T\to\infty}
    \frac1T\int_0^T \phi_z(t)\phi_z(t)^\top\dd t
    >0.
\end{equation}
\end{corollary}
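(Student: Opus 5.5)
The corollary follows from Theorem~\ref{thm:main_nonsticking_2} combined with Lemma~\ref{lem:scalar_ipe_equivalence_2}. I will prove the scalar form \eqref{eq:ipe_scalar_2} with a constant $\alpha>0$ that is uniform over the unit sphere $S=\{(a,b):\|a\|^2+b^2=1\}$, and then invoke the lemma.

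Fix $x\in X_0$ and write $z(t)=\Phi^t x$. Since $K$ is invariant and compact, $z(t)$ stays in $K$ for all $t$, so $\|z(t)\|\le R$ for some $R$. I split into two cases.

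\emph{Case 1: $\|a\|$ is small.} If $\|a\|\le \delta:=1/(2\sqrt{1+R^2})$, then $|b|\ge\sqrt{1-\delta^2}$. Hence
\[
|a\cdot z+b|\ge |b|-\delta R\ge \tfrac12
\]
for all $t$, and the time average of $|a\cdot z+b|^2$ is at least $1/4$.

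\emph{Case 2: $\|a\|>\delta$.} Let $H=\{a\cdot y+b=0\}$. Then
\[
|a\cdot y+b|=\|a\|\dist(y,H)\ge\delta\,\dist(y,H).
\]
For any $\eps>0$, on the complement of $U_\eps(H)$ we have $|a\cdot z+b|^2\ge\delta^2\eps^2$. Therefore
\[
\frac1T\int_0^T|a\cdot z+b|^2\,dt\ge \delta^2\eps^2\Bigl(1-\frac1T\int_0^T\mathbf 1_{U_\eps(H)}(z(t))\,dt\Bigr).
\]
By \eqref{eq:main_nonsticking_2}, for each $H$ there is an $\eps_H$ such that the limsup of the occupation fraction is below $1/2$. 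This gives a positive liminf for every single hyperplane.

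\emph{Main obstacle: uniformity over $S$.} The theorem is pointwise in $H$, but the lemma needs a single $\alpha$ for all $(a,b)\in S$. My plan is a compactness argument. Consider
\[
g(a,b)=\liminf_{T\to\infty}\frac1T\int_0^T|a\cdot z+b|^2\,dt.
\]
Since $|a\cdot z+b|^2$ is a quadratic form in $(a,b)$ with coefficients bounded by $1+R^2$, all averages $\frac1T\int_0^T(\cdot)$ are uniformly Lipschitz in $(a,b)$ on $S$. So $g$ is Lipschitz, in particular continuous, because a liminf of equi-Lipschitz functions is Lipschitz with the same constant.

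A continuous, everywhere positive function on the compact set $S$ attains a positive minimum $\alpha_0$. Taking $\alpha=\alpha_0/2$ gives \eqref{eq:ipe_scalar_2}.

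Equivalently, in matrix language, the liminf of $\frac1T\int_0^T\phi_z\phi_z^\top\,dt$ should be read as the statement that every limit point $G$ of these averages satisfies $v^\top G v\ge\alpha_0$ for all unit $v$. That is exactly the uniform scalar bound, since the averages lie in a compact set of matrices. Lemma~\ref{lem:scalar_ipe_equivalence_2} then yields \eqref{eq:main_ipe_2}.

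The only delicate point is this passage from the pointwise positivity of $g$ to a uniform bound. The equi-Lipschitz estimate is what makes it work, and I would check it carefully, since the liminf is taken separately for each $v$.
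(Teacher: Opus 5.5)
Your argument is correct and is essentially the paper's: the paper also gets positivity for each hyperplane from Theorem~\ref{thm:main_nonsticking_2}, handles $a=0$, $|b|=1$ trivially, and uses compactness of the unit sphere, phrased as a contradiction with a sequence $(a_j,b_j,T_j)$ whose pairs converge to some $(a,b)$. Your equi-Lipschitz bound for $\frac1T\int_0^T|a\cdot z+b|^2\,dt$ in $(a,b)$ is exactly what justifies the paper's unstated passage from $(a_j,b_j)$ to the limit $(a,b)$, so your direct version via continuity of $g$ is, if anything, slightly more complete.
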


As mentioned in the introduction, IPE for the state vector is considered mainly for clarity; the main theorems (the Birkhoff–Khinchin theorem \cite{Birkhoff1931,CornfeldFominSinai1982} and the Novikov–Yakovenko theorem \cite{NovikovYakovenko1999}) used in the proof (see Appendix) allow one to generalize the result of the paper to an arbitrary polynomial regressor and to apply the theorem to the regressor that is used in a concrete work on parameter identification or adaptive control.

Let us now show how this theorem can be applied to a concrete model.

\section{Illustrative Example}
\label{sec:triad_model_2}

As an illustrative example we take a simple ODE system: reduced three‑wave interactions \cite{kaup1979space}.

\begin{equation}\label{eq:triad_complex_2}
    i\dot A_1=\kappa_1\overline{A_2}A_3,
    \qquad
    i\dot A_2=\kappa_2\overline{A_1}A_3,
    \qquad
    i\dot A_3=\kappa_3A_1A_2,
\end{equation}
where $A_1,A_2,A_3\in\CC$, $\kappa\in D \subset \R^3$.

Write $A_j=x_j+iy_j$. Then \eqref{eq:triad_complex_2} defines a real polynomial system in $\R^6$:
\begin{equation}\label{eq:triad_real_2}
\begin{aligned}
\dot x_1&=\kappa_1(x_2y_3-y_2x_3),&
\dot y_1&=-\kappa_1(x_2x_3+y_2y_3),\\
\dot x_2&=\kappa_2(x_1y_3-y_1x_3),&
\dot y_2&=-\kappa_2(x_1x_3+y_1y_3),\\
\dot x_3&=\kappa_3(x_1y_2+y_1x_2),&
\dot y_3&=-\kappa_3(x_1x_2-y_1y_2).
\end{aligned}
\end{equation}

In the sequel we will work with the real vector
\[
    z=(x_1,y_1,x_2,y_2,x_3,y_3)\in\R^6,
    \qquad
    r_j(z)=x_j^2+y_j^2,
    \quad j=1,2,3.
\]

\begin{proposition}[Basic properties of the three-waves model]
\label{prop:triad_basic_2}
For every fixed $\kappa$, system \eqref{eq:triad_real_2} is polynomial, has zero divergence in $\R^6$, and therefore preserves the Lebesgue measure. Moreover, the functions
\begin{equation}\label{eq:triad_MR_2}
    I_{13}^{\kappa}(z)=\frac{x_1^2+y_1^2}{\kappa_1}+\frac{x_3^2+y_3^2}{\kappa_3},
    \qquad
    I_{23}^{\kappa}(z)=\frac{x_2^2+y_2^2}{\kappa_2}+\frac{x_3^2+y_3^2}{\kappa_3}
\end{equation}
are first integrals of the real system \eqref{eq:triad_real_2}.
\end{proposition}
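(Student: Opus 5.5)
The plan is a direct verification in three parts, each a routine polynomial computation on \eqref{eq:triad_real_2}.

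\emph{Polynomiality.} Each component of the vector field $F_\kappa(z)$ in \eqref{eq:triad_real_2} is a homogeneous quadratic in $z$ whose coefficients are linear in $\kappa$. This is already visible from the displayed system. It also places the model in the class \eqref{eq:polynomial_system_2}--\eqref{eq:linear_parameterization_2}.

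\emph{Zero divergence.} I will show that $\partial \dot x_j/\partial x_j=0$ and $\partial \dot y_j/\partial y_j=0$ for every $j$. For example, $\dot x_1$ depends only on $x_2,y_2,x_3,y_3$, so its derivative with respect to $x_1$ vanishes. Likewise $\dot y_1$ does not contain $y_1$, $\dot x_3$ does not contain $x_3$, and so on. The divergence is therefore identically zero; the structural reason is that each equation for $A_j$ involves only the other two amplitudes. Liouville's theorem then applies: for a $C^1$ field with $\operatorname{div}F=0$, the Jacobian $J(t)=\det D\Phi^t$ satisfies $\dot J=(\operatorname{div}F)(\Phi^t)\,J=0$, so $J\equiv1$. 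Hence the flow preserves Lebesgue measure wherever it is defined. Global existence follows from the first integrals established below whenever they confine orbits to a compact set, and local preservation suffices in any case.

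\emph{First integrals.} I will work in complex form, which is shorter. From $i\dot A_j=\dots$ we get $\dot A_1=-i\kappa_1\overline{A_2}A_3$, and therefore
\[
\frac{d}{dt}|A_1|^2=2\operatorname{Re}(\overline{A_1}\dot A_1)=2\kappa_1\operatorname{Im}(\overline{A_1}\,\overline{A_2}A_3).
\]
Similarly, $\dot A_3=-i\kappa_3A_1A_2$ gives
\[
\frac{d}{dt}|A_3|^2=2\kappa_3\operatorname{Im}(\overline{A_3}A_1A_2)=-2\kappa_3\operatorname{Im}(\overline{A_1}\,\overline{A_2}A_3).
\]
Dividing the first identity by $\kappa_1$ and the second by $\kappa_3$ and adding gives $\frac{d}{dt}I^\kappa_{13}=0$. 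The same computation with index $2$ in place of $1$ gives $\frac{d}{dt}I^\kappa_{23}=0$. Since $r_j=|A_j|^2=x_j^2+y_j^2$, these are exactly the functions in \eqref{eq:triad_MR_2}. One could instead verify $\nabla I\cdot F=0$ directly in real coordinates, but the complex route avoids sign bookkeeping.

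\emph{Main obstacle.} The only delicate point is sign conventions. I need to check carefully that the real system \eqref{eq:triad_real_2} really is the decomposition of \eqref{eq:triad_complex_2}; for instance, $-i\kappa_3(x_1+iy_1)(x_2+iy_2)$ has real part $\kappa_3(x_1y_2+y_1x_2)$, which matches. I also need $\kappa_j\ne0$ so that the integrals are defined, and I will state this implicitly as an assumption on $D$.
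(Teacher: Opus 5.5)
Your verification is correct. The paper states this proposition without proof and leaves exactly this direct computation to the reader (zero divergence term by term, then Liouville, as in its remark after Assumption~\ref{ass:main_2}). All six real components match the complex system, so your computation of $\frac{d}{dt}|A_j|^2$ via $\operatorname{Im}(\overline{A_1}\,\overline{A_2}A_3)$ transfers to \eqref{eq:triad_real_2}.

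Your caveats are also the right ones, and the paper leaves both implicit. The integrals need $\kappa_j\neq 0$ to be defined. The flow is global in time when the $\kappa_j$ share a sign, since then $I_{13}^\kappa+I_{23}^\kappa$ is definite.
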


\subsection{Application of the Main Theorem to the Three-waves model}
\label{sec:triad_application_2}

Choose numbers
\[
    0<a_1<b_1,
    \qquad
    0<a_2<b_2,
    \qquad
    \delta>0,
\]
and consider the set
\begin{equation}\label{eq:triad_band_2}
    K_{\kappa}=\left\{z\in\R^6:
    a_1\le I_{13}^{\kappa}(z)\le b_1,
    \quad
    a_2\le I_{23}^{\kappa}(z)\le b_2,
    \quad
    |I_{13}^{\kappa}(z)-I_{23}^{\kappa}(z)|\ge\delta
    \right\}.
\end{equation}

To check nondegeneracy we introduce the polynomial
\begin{equation}\label{eq:triad_delta_2}
    Q(z,\kappa)=
    \det\bigl[z^{(1)}(0;z,\kappa)\mid z^{(2)}(0;z,\kappa)
    \mid\dots\mid z^{(6)}(0;z,\kappa)\bigr],
\end{equation}
where $z(t)\in\R^6$ is a trajectory of the real system \eqref{eq:triad_real_2} and
\[
    z^{(j)}(0;z,\kappa)=\left.\frac{d^j}{dt^j}z(t)\right|_{t=0}
\]
for the solution with initial condition $z(0)=z$. Equivalently, the columns in \eqref{eq:triad_delta_2} equal
\[
    f(z,\kappa),\quad L_f f(z,\kappa),\quad \dots,\quad L_f^5 f(z,\kappa).
\]

At the point
\begin{equation}\label{eq:triad_test_point_2}
    z_*=(1,2,3,4,5,6),
    \qquad
    \kappa^0=\left(\frac{1}{3},\frac{1}{3},\frac{1}{3}\right)
\end{equation}
a direct calculation gives
\begin{equation}\label{eq:triad_delta_value_2}
    Q(z_*,\kappa^0)\ne0.
\end{equation}
Hence, for the fixed point $z_*$ the condition of Proposition \ref{prop:param_nondegeneracy_2} is satisfied. Therefore, except for a set of parameters $\kappa\in D$ of zero measure, the nondegeneracy condition of Assumption \ref{ass:main_2} holds on any band $K_\kappa$ for which $z_* \in K_\kappa$. In this case, by Theorem \ref{thm:main_nonsticking_2}, for almost all initial conditions $z_0\in K_\kappa$ the trajectory of the real model does not stick on average to any affine hyperplane in $\R^6$ and is affinely integrally exciting.

\section{Conclusion}\label{sec:discussion_2}

In this paper we have proposed an approach for verifying integral persistent excitation for conservative polynomial ODEs \emph{a priori}, without knowledge of the parameters. Rigorous proofs of the corresponding statements are provided.
Using the three‑wave interaction model as an example, we have shown how the method can be applied to systems with higher‑order interactions. The reasoning given for the illustrative example can be repeated without much difficulty for substantially more complex models, such as resonant cascades.
The authors hope that the community will find these results interesting and will use them for their own models. Future research will be aimed at applying the approach to various concrete problems of identification and adaptive control of complex systems, as well as at extending the results to the case of systems with dissipation.

\backmatter

\bmhead{Acknowledgements}

\section*{Declarations}

Not applicable


\begin{appendices}

\section{Lemma \ref{lem:scalar_ipe_equivalence_2}}\label{secA1}

\begin{proof}[Proof of Lemma \ref{lem:scalar_ipe_equivalence_2}]
Set
\[
    G_T=\frac1T\int_0^T\phi_z(t)\phi_z(t)^\top\dd t.
\]
For $u=(a,b)\in\R^{n+1}$, $\|u\|^2=\|a\|^2+b^2=1$, we have
\[
    u^\top\phi_z(t)\phi_z(t)^\top u
    =(u^\top\phi_z(t))^2
    =|a\cdot z(t)+b|^2.
\]
Hence
\[
    u^\top G_Tu
    =\frac1T\int_0^T |a\cdot z(t)+b|^2\dd t.
\]
The matrix inequality in \eqref{eq:ipe_definition_2} is understood in the sense of quadratic forms. Therefore, if \eqref{eq:ipe_definition_2} holds, then multiplying it from the left by $u^\top$ and from the right by $u$ gives \eqref{eq:ipe_scalar_2}. Conversely, if \eqref{eq:ipe_scalar_2} holds for all normalized $u=(a,b)$, then the last identity gives the same lower bound for all normalized quadratic forms of $G_T$. This is precisely \eqref{eq:ipe_definition_2}.
\end{proof}

\section{Proof of Parametric Nondegeneracy}
\label{app:param_nondegeneracy_2}

\begin{proof}[Proof of Proposition \ref{prop:param_nondegeneracy_2}]
From the linear parametrization \eqref{eq:linear_parameterization_2} it follows that the components of $f_\xi(x)$ are polynomials in the joint variables $(x,\xi)$. Therefore $L_{f_\xi} f_\xi$, $L_{f_\xi}^2 f_\xi$, $\dots$, $L_{f_\xi}^{n-1}f_\xi$ are also polynomials in $(x,\xi)$. Consequently,
\[
    Q_\xi(x)=\det\bigl[f_\xi(x)\mid L_{f_\xi} f_\xi(x)\mid\dots\mid L_{f_\xi}^{n-1}f_\xi(x)\bigr]
\]
is a polynomial in $(x,\xi)$, and for a fixed $x_*$ the function $\xi\mapsto Q_\xi(x_*)$ is a polynomial in the parameters. Since for some $\xi^0$ we have $Q_{\xi^0}(x_*)\ne0$, this polynomial is not identically zero. Its zero set is a proper algebraic set and has zero Lebesgue measure. This proves the claim.
\end{proof}

\section{Technical Lemmas for the Main Theorem}
\label{app:technical_lemmas_2}
A Lean formalization (mathlib 4.28.0) of the proof of this theorem can be found at \url{https://github.com/Sashkasem/IPE_proof_lean}.

Throughout this appendix we assume that Assumption \ref{ass:main_2} holds, the parameter $\xi$ is fixed, and we use the convention of dropping the index introduced above. Set
\[
    M(x)=\bigl[f(x)\mid L_f f(x)\mid\dots\mid L_f^{n-1}f(x)\bigr],
    \qquad
    Q(x)=\det M(x),
\]
\[
    Z=\{x\in K:Q(x)=0\},
    \qquad
    W_\delta=\{x\in K:|Q(x)|<\delta\}.
\]

\begin{lemma}[Properties of the singular set]
\label{lem:singular}\label{lem:singular_set_2}
\begin{enumerate}
    \item If a trajectory lies entirely in a hyperplane $H$, then it lies entirely in $Z$.
    \item $\lambda(Z)=0$, hence $\mu(Z)=0$, and $\mu(W_\delta)\to0$ as $\delta\to0$.
\end{enumerate}
\end{lemma}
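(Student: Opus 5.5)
For part 1, suppose the trajectory $x(t)=\Phi^t x_0$ lies entirely in the affine hyperplane $H=\{y: a\cdot y+b=0\}$ with $a\neq0$. Then $a\cdot x(t)+b\equiv0$ for all $t$. Since $f$ is polynomial, the trajectory is real-analytic in $t$, so this identity can be differentiated as often as we like. The derivatives along the flow satisfy
\[
\frac{d^{k}}{dt^{k}}x(t)=L_f^{k-1}f(x(t)),\qquad k\ge1,
\]
which follows by induction from $\frac{d}{dt}g(x(t))=(L_fg)(x(t))$. Differentiating the identity $k=1,\dots,n$ times therefore gives $a\cdot L_f^{k-1}f(x(t))=0$, that is, $a^\top M(x(t))=0$. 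Because $a\neq0$, the matrix $M(x(t))$ is singular, so $Q(x(t))=0$ for every $t$. By invariance (A1), $x(t)\in K$, hence $x(t)\in Z$. The only point needing care is that $H$ is a genuine hyperplane, i.e.\ $a\ne0$.

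For part 2, $Q$ is a polynomial (as in the proof of Proposition~\ref{prop:param_nondegeneracy_2}), and $Q(x_*)\neq0$ by (A3), so $Q\not\equiv0$. I would invoke the standard fact that the zero set of a nonzero polynomial on $\R^n$ has Lebesgue measure zero. If a self-contained proof is wanted, induct on $n$ with Fubini: write $Q=\sum_j q_j(x')x_n^j$ with some $q_j\not\equiv0$. For $x'$ outside the null set $\{q_j=0\}$, the fiber $\{x_n: Q(x',x_n)=0\}$ is finite. Hence $\lambda(Z)\le\lambda(\{Q=0\})=0$. Since $d\mu=\rho\,dx$ with $\rho\le C$, we get $\mu(Z)\le C\lambda(Z)=0$.

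For the final claim, the sets $W_\delta$ decrease as $\delta\downarrow0$ and $\bigcap_{\delta>0}W_\delta=\{x\in K:Q(x)=0\}=Z$. Since $\mu$ is a finite measure, continuity from above gives $\mu(W_\delta)\to\mu(Z)=0$. Measurability is not an issue: $W_\delta$ is relatively open in $K$ and $Z$ is closed, both because $Q$ is continuous.

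The main obstacle is really only the measure-zero statement for polynomial zero sets, which I would either cite or prove by the Fubini induction above. Everything else is direct.
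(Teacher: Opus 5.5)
Your proposal is correct and follows the paper's proof essentially step for step. It differentiates the affine identity along the trajectory to get $a^\top M(x(t))=0$, then uses that a nonzero polynomial has a Lebesgue-null zero set, $\rho\le C$ for absolute continuity, and continuity from above for $W_\delta\downarrow Z$; your additions (the Fubini induction, the explicit appeal to (A1) to keep the trajectory in $K$, and the measurability remark) fill in details the paper leaves implicit.
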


\begin{proof}
(1) Let $H=\{y:\langle v,y\rangle=c\}$, $v\ne0$. Differentiating the identity
\[
    \langle v,\Phi^t(x)\rangle\equiv c
\]
at zero, we obtain
\[
    \langle v,L_f^k f(x)\rangle=0,
    \qquad k=0,\dots,n-1.
\]
In other words, the nonzero vector $v$ is orthogonal to all columns of the matrix $M(x)$. Hence the columns of $M(x)$ are linearly dependent and $Q(x)=0$. This argument applies not only to the initial point but also to any other point of the same trajectory. Therefore, if a trajectory lies entirely in a hyperplane, it lies entirely in $Z$.

(2) By Assumption \ref{ass:main_2} there exists a point $x_*\in K$ such that $Q(x_*)\ne0$. Hence the polynomial $Q$ is not identically zero. A nonzero polynomial vanishes on a set of zero Lebesgue measure, so $\lambda(Z)=0$. The absolute continuity of the measure $\mu$ yields $\mu(Z)=0$. Since $W_\delta\downarrow Z$ as $\delta\downarrow0$, continuity of measure from above gives $\mu(W_\delta)\to\mu(Z)=0$.
\end{proof}

\subsection*{Step 0: description of the set $X_0$}

\begin{proposition}[Step 0]
\label{prop:step0_X0_2}
There exists an invariant set of full measure $X_0\subset K$ such that for every $x\in X_0$
\begin{equation}\label{eq:step0_X0_2}
    \lim_{\delta\to0}\lim_{T\to\infty}
    \frac1T\int_0^T\mathbf 1_{W_\delta}(\Phi^t x)\dd t=0.
\end{equation}
\end{proposition}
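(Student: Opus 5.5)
The plan is to apply the Birkhoff--Khinchin ergodic theorem to a countable family of indicator functions and then use Lemma \ref{lem:singular_set_2}(2) to pass to the limit in $\delta$. No ergodicity of $\mu$ is assumed, so time averages converge to the conditional expectation with respect to the $\sigma$-algebra $\mathcal I$ of invariant sets, not to $\mu(W_\delta)$. The main work is to show that this conditional expectation nevertheless tends to zero almost everywhere.

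\textbf{Step 1 (countable family).} Take $\delta_k=1/k$, $k\in\mathbb N$. For each $k$, the function $g_k=\mathbf 1_{W_{\delta_k}}$ is bounded and measurable; $W_\delta$ is relatively open in $K$, since $Q$ is continuous. By (A1) and (A2), $\Phi^t$ is a measurable flow on $K$ preserving $\mu$. The continuous-time Birkhoff--Khinchin theorem then gives a set $Y_k$ with $\mu(Y_k)=1$ on which
\[
\lim_{T\to\infty}\frac1T\int_0^T g_k(\Phi^t x)\dd t=\bar g_k(x),
\]
where $\bar g_k=\mathbb E_\mu[g_k\mid\mathcal I]$ is invariant and satisfies $\int \bar g_k\dd\mu=\mu(W_{\delta_k})$. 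The limit exists in the full (not merely limsup) sense, as \eqref{eq:step0_X0_2} requires. The set of points where the time average converges is automatically invariant, since shifting the starting point changes the integral by a bounded term divided by $T$. So I would replace $Y_k$ by $\bigcap_{q\in\mathbb Q}\Phi^{q}(Y_k)$ or simply by the convergence set itself, which is invariant and still has full measure.

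\textbf{Step 2 (monotone limit in $\delta$).} Since $W_{\delta_{k+1}}\subset W_{\delta_k}$, we have $g_{k+1}\le g_k$, so $\bar g_{k+1}\le \bar g_k$ almost everywhere. Discarding a null set, the sequence $\bar g_k(x)$ is nonincreasing on a full-measure set, and hence has a limit $h(x)\ge 0$. By monotone or dominated convergence,
\[
\int h\dd\mu=\lim_k\mu(W_{\delta_k})=\mu(Z)=0
\]
by Lemma \ref{lem:singular_set_2}(2). Therefore $h=0$ $\mu$-a.e. This is the key point: almost-everywhere vanishing follows from the integral without any ergodicity. I would take $X_0=\bigcap_k Y_k\cap\{h=0\}\cap\{\bar g_k \text{ nonincreasing}\}$. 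Each piece is invariant, because $\bar g_k$ and hence $h$ are invariant functions on the convergence set, so $X_0$ is invariant with $\mu(X_0)=1$.

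\textbf{Step 3 (all $\delta$, not just $1/k$).} For arbitrary $\delta\in(1/(k+1),1/k]$, monotonicity of $W_\delta$ in $\delta$ sandwiches the time averages. The $\limsup$ and $\liminf$ in $T$ of the average for $W_\delta$ lie between $\bar g_{k+1}(x)$ and $\bar g_k(x)$, both of which tend to $0$. So the outer limit $\delta\to0$ equals $0$. Strictly, the inner limit in $T$ need only exist for the countable family; I would note that \eqref{eq:step0_X0_2} is read either along $\delta_k$ or with the inner limit replaced by $\limsup$, and that the sandwich covers both readings.

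The step I expect to be most delicate is making $X_0$ genuinely invariant (for all $t\in\R$, not just almost all) while handling the non-ergodic conditional expectations. The ergodic averages themselves are routine. I would handle invariance by defining $X_0$ intrinsically, as the set of $x$ at which all the limits exist and the decreasing limit is $0$. Such a condition is unchanged under $x\mapsto\Phi^s x$, because $\frac1T\int_0^T g(\Phi^{t+s}x)\dd t$ differs from the unshifted average by at most $2|s|/T$.
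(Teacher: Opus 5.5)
Your proposal is correct and follows the same route as the paper. Both apply Birkhoff--Khinchin to the indicators of $W_\delta$, take a monotone limit in $\delta$ whose integral is $\lim\mu(W_\delta)=\mu(Z)=0$ (so it vanishes a.e.\ with no ergodicity needed), and get invariance of $X_0$ because shifting the starting point changes time averages by only $O(|s|/T)$.

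Your use of the countable family $\delta_k=1/k$, with the sandwich for intermediate $\delta$, is more rigorous than the paper, which applies the theorem separately for each of uncountably many $\delta$ and then treats the union of the exceptional null sets as null. Neither you nor the paper actually shows that the inner limit in $T$ exists for every $\delta$: you get it along $\delta_k$, and only $\liminf$/$\limsup$ bounds otherwise. That weaker form is all that Step 1 of the main proof uses.
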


\begin{proof}
For each $\delta>0$, by the Birkhoff–Khinchin theorem \cite{Birkhoff1931,CornfeldFominSinai1982}, for almost every $x\in K$ the limit
\[
    b_\delta(x)=\lim_{T\to\infty}\frac1T\int_0^T\mathbf 1_{W_\delta}(\Phi^t x)\dd t
\]
exists. Here the Birkhoff–Khinchin theorem is applied to the indicator of the neighborhood of $Z$, i.e., to the function $g(x)=\mathbf 1_{W_\delta}(x)$. Moreover,
\[
    \int_K b_\delta(x)\dd\mu
    =\int_K \mathbf 1_{W_\delta}(x)\dd\mu
    =\mu(W_\delta).
\]

Since $W_\delta$ decreases as $\delta\downarrow0$, the function $\delta\mapsto b_\delta(x)$ is nonincreasing almost everywhere. Define
\[
    b_0(x)=\lim_{\delta\to0}b_\delta(x)=\inf_{\delta>0}b_\delta(x).
\]
By the monotone convergence theorem,
\[
    \int_K b_0(x)\dd\mu
    =\lim_{\delta\to0}\int_K b_\delta(x)\dd\mu
    =\lim_{\delta\to0}\mu(W_\delta)=0,
\]
because $\mu(W_\delta)\to\mu(Z)=0$. Consequently, $b_0(x)=0$ for $\mu$-almost all $x$.

Set
\[
    X_0=\left\{x\in K:
    \lim_{\delta\to0}\lim_{T\to\infty}
    \frac1T\int_0^T\mathbf 1_{W_\delta}(\Phi^t x)\dd t=0
    \right\}.
\]
Then $\mu(X_0)=1$. The invariance of $X_0$ follows from the invariance of time averages with respect to a shift of the initial moment.

Thus, the set $X_0$ is $K$ without the set $Z$ and without those points for which the corresponding statement of the Birkhoff–Khinchin ergodic theorem does not hold. For any $x\in X_0$ and any $\eta>0$ there exists $\delta>0$ such that
\[
    b_\delta(x)=\lim_{T\to\infty}\frac1T\int_0^T\mathbf 1_{W_\delta}(\Phi^t x)\dd t<\eta.
\]
\end{proof}

\subsection*{Dynamics outside $W_\delta$}

Fix a hyperplane $H=\{l(x)=0\}$ with $l(x)=\langle a,x\rangle-c$ and $\|a\|=1$. For $x\in K$ along the trajectory we will write
\[
    x(t)=\Phi^t(x),
    \qquad
    x^{(k)}(0)=\frac{d^k}{dt^k}\Phi^t(x)\bigg|_{t=0}=L_f^{k-1}f(x).
\]
From the compactness of $K$ and the polynomial nature of the field it follows that all the derivatives used below are uniformly bounded on $K$. Denote a common upper bound by $M$.

\begin{lemma}[Uniform separation of derivatives]
\label{lem:sep}\label{lem:derivative_separation_2}
For each $\delta>0$ there exists $c_\delta>0$ such that for all $x_0\in K\setminus W_\delta$ and all unit normals $a$,
\[
    \max_{1\le k\le n}|\langle a,x^{(k)}(0)\rangle|\ge c_\delta.
\]
\end{lemma}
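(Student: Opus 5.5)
The plan is a compactness argument combined with a lower bound for the least singular value of $M(x_0)$. Observe that $x^{(k)}(0)=L_f^{k-1}f(x_0)$ is exactly the $k$-th column of $M(x_0)$. Hence
\[
    \bigl(\langle a,x^{(1)}(0)\rangle,\dots,\langle a,x^{(n)}(0)\rangle\bigr)=M(x_0)^\top a .
\]
So the claim says that $\|M(x_0)^\top a\|_\infty\ge c_\delta$ uniformly over $x_0\in K\setminus W_\delta$ and unit vectors $a$.

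The first route is purely topological. Consider the function
\[
    F(x_0,a)=\max_{1\le k\le n}|\langle a,L_f^{k-1}f(x_0)\rangle| .
\]
It is continuous, since it is a maximum of absolute values of polynomials in $(x_0,a)$. The set $K\setminus W_\delta=\{x\in K:|Q(x)|\ge\delta\}$ is closed in the compact set $K$, so it is compact. Therefore $F$ attains its minimum on the compact set $(K\setminus W_\delta)\times S^{n-1}$.

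If that minimum were $0$, there would be a point $x_0$ with $|Q(x_0)|\ge\delta>0$ and a unit $a$ orthogonal to every column of $M(x_0)$. This would make $M(x_0)$ singular, contradicting $Q(x_0)\ne0$; the same reasoning appears in Lemma \ref{lem:singular_set_2}(1). Hence the minimum $c_\delta$ is positive. If $K\setminus W_\delta$ is empty the statement is vacuous, and any $c_\delta$ works.

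Optionally, $c_\delta$ can be made explicit using the uniform bound $M$ on the derivatives over $K$. Write $\sigma_{\min}$ and $\sigma_{\max}$ for the smallest and largest singular values of $M(x_0)$. Then $|Q(x_0)|=\prod\sigma_i\le\sigma_{\min}\,\sigma_{\max}^{n-1}$. Each column has norm at most $\sqrt n\,M$, so $\sigma_{\max}\le \|M(x_0)\|_F\le nM$. This gives
\[
    \sigma_{\min}\ge \frac{\delta}{(nM)^{n-1}} .
\]
Then $\|M(x_0)^\top a\|_\infty\ge n^{-1/2}\|M(x_0)^\top a\|_2\ge n^{-1/2}\sigma_{\min}$, which yields
\[
    c_\delta=\frac{\delta}{\sqrt n\,(nM)^{n-1}} .
\]
This explicit version avoids the compactness argument and is probably more convenient for the Lean formalization.

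There is no real obstacle here. The only points needing care are closedness of $K\setminus W_\delta$, which holds because $W_\delta$ is relatively open by continuity of $Q$, and bookkeeping the constant in the explicit bound.
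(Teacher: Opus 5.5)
Your main argument is correct and is essentially the paper's proof: minimize the continuous function $\max_k|\langle a,L_f^{k-1}f(x_0)\rangle|$ over the compact set $(K\setminus W_\delta)\times S^{n-1}$, and rule out a zero minimum because a unit $a$ orthogonal to every column of $M(x_0)$ would force $Q(x_0)=0$. Your optional explicit constant $c_\delta=\delta/(\sqrt n\,(nM)^{n-1})$, obtained from $|Q|\le\sigma_{\min}\sigma_{\max}^{n-1}$ and $\sigma_{\max}\le\|M(x_0)\|_F$, is also correct; it is a quantitative bonus that the paper does not provide.
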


\begin{proof}
Consider the continuous function
\[
    \Psi(x_0,a)=\max_{1\le k\le n}|\langle a,x^{(k)}(0)\rangle|
\]
on the compact set $(K\setminus W_\delta)\times S^{n-1}$. We prove by contradiction that its minimum cannot be zero. If the minimum were zero, then for some pair $(x_0,a)$ the nonzero vector $a$ would be orthogonal to all vectors
\[
    x^{(1)}(0),\dots,x^{(n)}(0),
\]
i.e., to all columns of $M(x_0)$. Then $\det M(x_0)=0$, hence $x_0\in Z\subset W_\delta$, contradicting $x_0\notin W_\delta$. Therefore the minimum is positive; set $c_\delta:=\min\Psi>0$.
\end{proof}

\begin{lemma}[Exit time]
\label{lem:exit}\label{lem:exit_time_2}
For every $T_*>0$ there exists $\eps_*(\delta)>0$ such that if $x_0\notin W_\delta$, then the trajectory cannot stay continuously in $U_\eps(H)$ longer than $T_*$ for all $\eps\le\eps_*(\delta)$.
\end{lemma}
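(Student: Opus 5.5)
The idea is to argue by Taylor expansion of the scalar function $g(t)=l(\Phi^t x_0)=\langle a,\Phi^t(x_0)\rangle-c$ on $[0,T_*]$, combined with the uniform separation of derivatives from Lemma~\ref{lem:derivative_separation_2}. If the trajectory stays in $U_\eps(H)$ on $[0,T_*]$, then $|g(t)|<\eps$ for all $t\in[0,T_*]$, because $\dist(y,H)=|l(y)|$ when $\|a\|=1$. I will show that a polynomial-like function whose first $n$ derivatives at $0$ are not all small cannot stay uniformly $\eps$-small on a fixed interval. The threshold $\eps_*$ will depend only on $\delta$, $T_*$, $n$ and the bound $M$, and not on $a$, $c$ or $x_0$.

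\textbf{Step 1 (Taylor model).} Write
\[
g(t)=g(0)+\sum_{k=1}^{n}\frac{g^{(k)}(0)}{k!}t^k+R(t),\qquad |R(t)|\le \frac{M t^{n+1}}{(n+1)!},
\]
where $g^{(k)}(0)=\langle a,x^{(k)}(0)\rangle$. The remainder bound uses the uniform bound $M$ on the $(n+1)$-st derivative along trajectories in the invariant compact set $K$. By Lemma~\ref{lem:derivative_separation_2}, the coefficient vector $(g'(0),\dots,g^{(n)}(0))$ has sup-norm at least $c_\delta$.

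\textbf{Step 2 (polynomials on a short interval).} Choose $h\in(0,T_*]$ small; $h$ will be fixed below depending only on $\delta$. Let $P(t)=\sum_{k=0}^n p_k t^k$ be the Taylor polynomial. Sample $P$ at $n+1$ equispaced nodes $t_j=jh/n$ and invert the Vandermonde system. This gives $|p_k|\le C_n h^{-k}\max_j|P(t_j)|$ with a constant $C_n$ depending only on $n$; equivalently, apply Markov's inequality on $[0,h]$. Since $|P(t_j)|\le |g(t_j)|+|R(t_j)|<\eps+Mh^{n+1}/(n+1)!$, we get for some $k\ge1$
\[
\frac{c_\delta}{n!}\le |p_k|\le C_n h^{-n}\Bigl(\eps+\frac{Mh^{n+1}}{(n+1)!}\Bigr)\qquad(h\le1).
\]

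\textbf{Step 3 (choice of constants).} First fix $h=h(\delta)\le\min(1,T_*)$ so small that $C_nMh/(n+1)!\le c_\delta/(2\,n!)$. Then set $\eps_*=c_\delta h^n/(4C_n n!)$. For $\eps\le\eps_*$, the right-hand side of Step 2 is at most $3c_\delta/(4\,n!)$, which contradicts the left-hand side. Hence the trajectory leaves $U_\eps(H)$ within time $h\le T_*$.

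The main obstacle is Step 2, namely getting the polynomial coefficient bound uniform in $a$ and $x_0$, and the interplay with the remainder term. The order of choices ($h$ first, then $\eps_*$) is what resolves it. A secondary subtlety is the reading of the statement: the hypothesis concerns only the starting point $x_0\notin W_\delta$. That is exactly what the argument uses, since the Taylor expansion is taken at time $0$. If the lemma is later needed for arbitrary entry times, one applies it at each time $s$ with $\Phi^s x_0\notin W_\delta$.
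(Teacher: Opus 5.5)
Your proof is correct, but it takes a different route from the paper's.

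\textbf{The paper's route.} It argues by descending induction on the order of the derivative. Start from an order $k$ with $|\langle a,x^{(k)}(0)\rangle|\ge c_\delta$. The bound $M$ on $g^{(k+1)}$ keeps $g^{(k)}$ of one sign on a window of length $\tau_k=\min\{C_k/M,T_*/k\}$. Hence $g^{(k-1)}$ changes by at least $\Delta C_{k-1}$ on that window, so $|g^{(k-1)}|\ge\tfrac12\Delta C_{k-1}$ at one of its two endpoints. The paper repeats this step down to $g$ itself, sets $\eps_*=C_0$, and sums at most $k$ windows of length at most $T_*/k$.

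\textbf{Your route.} You do everything in one step: a single order-$n$ Taylor expansion at $t=0$, plus the finite-dimensional fact that, after rescaling to $[0,h]$, the coefficients of a degree-$\le n$ polynomial are controlled by its values at $n+1$ nodes. You fix $h$ first so that the remainder is absorbed, and only then choose $\eps_*$.

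\textbf{What each buys.} The paper's argument uses only sign persistence and monotonicity. However, its bookkeeping is left implicit in two places. First, the constants $C_j$ depend on the unknown order $k$, so one must take a minimum over $k=1,\dots,n$. Second, after the first step the argument is applied at intermediate times, where only the bound $M$ is available, not Lemma~\ref{lem:sep}. Your argument avoids the sign case analysis and the intermediate times altogether. It makes uniformity in $a$, $c$ and $x_0$ transparent, and it yields explicit constants: $h\asymp\min\{1,T_*,c_\delta/M\}$ and $\eps_*\asymp c_\delta h^n$, up to factors depending only on $n$. Both proofs rest on the same two inputs: Lemma~\ref{lem:sep} at time $0$, and a uniform bound on the derivatives of order up to $n+1$ along trajectories, which uses the invariance of $K$.
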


\begin{proof}
Choose $T_*$. By Lemma \ref{lem:sep} there exists a derivative of order $k$ such that
\[
    |\langle a,x^{(k)}(0)\rangle|\ge c_\delta,
\]
where $a$ is the normal to $H$. Denote
\[
    C_k=\langle a,x^{(k)}(0)\rangle.
\]
Without loss of generality we assume that this derivative is positive. As noted before, all derivatives can be bounded on the compact set by a common constant $M$. Hence the $k$-th derivative keeps its sign for at least time $C_k/M$. Consider the time interval $[0,\tau_k]$, where
\[
    \tau_k=\min\{C_k/M,T_*/k\},
\]
and see how the $(k-1)$-st derivative changes during this time.

On this interval it is monotone and increases by at least
\[
    \Delta C_{k-1}=\int_0^{\tau_k}(C_k-Mt)\dd t.
\]
This expression has a lower bound that depends only on $T_*$, $k$, $c_\delta$, that is, essentially only on $\delta$ and our choice of $T_*$. Next observe that either the $(k-1)$-st derivative was less than $-\frac12\Delta C_{k-1}$ at time $0$, or it becomes greater than $\frac12\Delta C_{k-1}$ at time $\tau_k$. In any case its modulus can be bounded below by the constant
\[
    C_{k-1}=\frac12\int_0^{\tau_k}(C_k-Mt)\dd t=\frac12\Delta C_{k-1}.
\]

Repeating this argument by induction we conclude that with $\eps_*(\delta)=C_0$ the trajectory leaves the $\eps$-neighborhood of $H$ in time not exceeding the sum of $k$ terms, each of which is at most $T_*/k$ (either $\tau_k$ or $0$). That is, in time not exceeding $T_*$. Moreover, this estimate is global.
\end{proof}

\begin{remark}[Important remarks on Lemmas \ref{lem:sep} and \ref{lem:exit}]
\begin{enumerate}
    \item The estimate in Lemma \ref{lem:sep} is uniform over all $x_0\in K\setminus W_\delta$ and all unit normals $a$.
    \item In Lemma \ref{lem:exit} it is important only that at the initial moment $x_0\notin W_\delta$; the subsequent dynamics may enter $W_\delta$ and leave it many times.
    \item The number $T_*$ can be chosen arbitrarily small and then adjusted to other estimates.
\end{enumerate}
\end{remark}

\subsection*{Estimate of the number of intersections with surfaces}

To obtain an estimate of the average time, one must exclude the situation where the trajectory frequently leaves and immediately enters $U_\eps(H)$. Therefore it is important to bound from above the number of intersections with the boundaries $\{l=\pm\eps\}$ on each time block. The Novikov–Yakovenko theorem is used for this purpose.

\begin{proposition}[Novikov–Yakovenko theorem, autonomous case]
\label{prop:novikov_yakovenko}\label{prop:novikov_yakovenko_2}

Let a polynomial ODE
\[
    \dot{x}=p(x), \qquad x \in \mathbb{R}^n,
\]
be given, where the $p_i(x)$ have degree at most $d$. Assume that the absolute values of the coefficients of these polynomials are bounded by $r$.

Let $\Gamma: I \to \mathbb{R}^n$ be a trajectory on an interval $I \subset \mathbb{R}$ that is contained in a ball of radius $r$. The length of the interval $I$ also does not exceed $r$.

Then for any affine hyperplane $H \subset \mathbb{R}^n$ the number of isolated intersection points of $\Gamma$ with $H$ (counting multiplicities) is bounded by a constant $\Omega(r, n, d) \le (2 + r)^{B(n,d)}$ that depends only on $n$, $d$, and $r$.
\end{proposition}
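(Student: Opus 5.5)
Since this is the (autonomous) Novikov--Yakovenko theorem quoted from \cite{NovikovYakovenko1999}, I would not reprove its effective part from scratch. The plan is to reduce the count of intersections to counting zeros of a solution of a linear ODE with bounded coefficients, and then to cite the two deep ingredients.

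\emph{Step 1: reduction to a scalar function.} Write $H=\{l(x)=0\}$ with $l(x)=\langle a,x\rangle-c$ and $\|a\|=1$. We may assume $|c|\le 2r$: otherwise $H$ does not meet the ball containing $\Gamma$, and there is nothing to count. Set $g(t)=l(\Gamma(t))$. Its derivatives are $g^{(k)}(t)=(L_p^k l)(\Gamma(t))$. It is convenient to treat $(a,c)$ as additional variables. Then $\ell_k:=L_p^k l$ is a polynomial in $(x,a,c)$, of degree at most $1+k(d-1)$ in $x$ and linear in $(a,c)$. If $g\equiv0$ on $I$ there are no isolated zeros, so we assume $g\not\equiv0$. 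The intersection points of $\Gamma$ with $H$, counted with multiplicity, are then exactly the zeros of $g$ on $I$.

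\emph{Step 2: a linear ODE for $g$.} Consider the ascending chain of ideals $J_k=(\ell_0,\dots,\ell_k)\subset\R[x,a,c]$. By the Hilbert basis theorem it stabilizes at some step $N$, which yields an identity
\[
\ell_N=\sum_{j<N}h_j(x,a,c)\,\ell_j
\]
with polynomial $h_j$. Restricting to the trajectory gives
\[
g^{(N)}(t)=\sum_{j<N}h_j(\Gamma(t),a,c)\,g^{(j)}(t).
\]
This is a linear ODE whose coefficients are bounded on the compact set $\{\|x\|\le r,\ \|a\|=1,\ |c|\le 2r\}$ by a constant $A$. The key requirement is that $N$ and the degrees and coefficient sizes of the $h_j$ are bounded explicitly in terms of $n$, $d$, $r$ only. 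Because we worked with universal coefficients $(a,c)$, these bounds do not depend on the particular hyperplane.

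\emph{Step 3: counting zeros.} I would apply the de la Vallée Poussin theorem. A nontrivial solution of $y^{(N)}=\sum_{j<N}\alpha_j(t)y^{(j)}$ with $|\alpha_j|\le A$, on an interval of length $\ell$, has at most $C(N)(1+A\ell)^{N}$ zeros counted with multiplicity; this follows by subdividing $I$ into pieces short enough to be disconjugate. Since $\ell\le r$ and $A$ is polynomial in $r$ with exponents depending on $(n,d)$, the resulting count is of the form $(2+r)^{B(n,d)}$.

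\emph{Main obstacle.} The hard step is the effective bound in Step 2. The Hilbert basis theorem only gives finiteness of $N$; an explicit bound on the chain length and on the degrees of the $h_j$ requires the Novikov--Yakovenko result on the stabilization of ideals generated by derivations (or a Gabrielov-type multiplicity estimate). For this step I would simply invoke \cite{NovikovYakovenko1999}. For Theorem~\ref{thm:main_nonsticking_2} itself, mere uniformity of $\Omega$ in $H$ is what is used. That uniformity would already follow non-effectively from the compactness argument in Step 2 together with de la Vallée Poussin, so the explicit exponent is not essential there.
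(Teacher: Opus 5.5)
Your proposal is correct and matches the paper, which gives no proof of this proposition and simply quotes it from \cite{NovikovYakovenko1999}. Your Steps 1--3 are the standard Novikov--Yakovenko scheme (the chain of ideals generated by the Lie derivatives of $l$, followed by de la Vall\'ee Poussin), and you defer the hard step to the same reference. One small refinement: if you also adjoin the coefficients of $p$ as indeterminates alongside $(a,c)$, the Hilbert basis theorem already yields a universal $N(n,d)$ and universal $h_j$, and hence the stated bound $(2+r)^{B(n,d)}$ with a non-explicit $B$, so the citation is genuinely needed only for an explicit value of $B(n,d)$.
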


\begin{lemma}[Uniform bound on intersections]
\label{lem:intersect}\label{lem:intersect_2}

For a fixed $T_0$ and small $\delta,\eps$ there exists a universal constant $N$ such that for any hyperplane $H$ the number of intersections of the trajectory with the surfaces $\{l=\pm\eps\}$ on any interval of length $T_0$ does not exceed $N$.
\end{lemma}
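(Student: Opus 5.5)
The plan is to reduce the statement to Proposition \ref{prop:novikov_yakovenko} by rescaling time so that an interval of length $T_0$ becomes an interval of length at most $r$, with all data controlled by a single $r$. Fix $T_0>0$ and let $d$ be the maximal degree of the components of $f$. Since $K$ is compact and invariant, every trajectory starting in $K$ stays in $K$, so $K\subset B(0,R)$ for some $R$. Let $r_0$ bound the absolute values of the coefficients of $f$. I would take
\[
    r=\max\{R,\;r_0,\;T_0\}+1 .
\]
If $T_0>r$ is preferred for some reason, the alternative is to cut $[t_0,t_0+T_0]$ into $\lceil T_0/r\rceil$ subintervals of length at most $r$ and sum the bounds. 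Either way the number of pieces depends only on $T_0$ and $r$.

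The key steps are the following.

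\emph{(i)} For any $x\in K$ and any $t_0$, the restriction $\Gamma=\Phi^{t}(x)$, $t\in[t_0,t_0+T_0]$, is a trajectory of $\dot x=f(x)$. Its time interval has length at most $r$, it lies in a ball of radius $r$, and the coefficients of $f$ are bounded by $r$. Autonomy lets us shift $t_0$ to $0$.

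\emph{(ii)} For any hyperplane $H=\{l=0\}$ with $\|a\|=1$, the level surfaces $\{l=\eps\}$ and $\{l=-\eps\}$ are again affine hyperplanes. Applying Proposition \ref{prop:novikov_yakovenko} to each of them bounds the isolated intersection points by $\Omega(r,n,d)$. Hence
\[
    N=2\,\Omega(r,n,d)\le 2(2+r)^{B(n,d)} .
\]
This bound is independent of $H$, $\eps$, $\delta$ and of the starting point, which is exactly the required uniformity.

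\emph{(iii)} The main obstacle is that the Novikov–Yakovenko bound counts only \emph{isolated} intersections. I must therefore exclude the case that $l(\Phi^t x)\mp\eps$ vanishes on a nontrivial subinterval. The function $t\mapsto l(\Phi^t x)$ is real-analytic, since the field is polynomial. So it either has isolated zeros on the compact interval or vanishes identically on it, and in the latter case the whole trajectory lies in the hyperplane $\{l=\pm\eps\}$.

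By Lemma \ref{lem:singular}(1), such a trajectory lies entirely in $Z\subset W_\delta$. That is incompatible with the situation in which the lemma is used, namely trajectories through points outside $W_\delta$ or $x\in X_0$, where the time fraction in $W_\delta$ is small. More simply, $Z$ is a $\mu$-null invariant set excluded from $X_0$. So I would state the lemma for trajectories not contained in $Z$. For those, all intersections are isolated, and the count with multiplicities dominates the count of crossings, which gives the bound $N$. This is the step I would check most carefully, together with confirming that the "small $\delta,\eps$" hypothesis is used only to keep the relevant trajectories off $Z$ and plays no role in the constant itself.
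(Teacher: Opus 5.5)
Your proposal is correct and follows the paper's route: you apply the Novikov--Yakovenko bound separately to the two hyperplanes $\{l=\pm\eps\}$, with one $r$ controlling the ball, the coefficients of $f$ and the interval length, so that $N$ is independent of $H$, $\eps$, $\delta$ and the starting point. Your step (iii) handles a point the paper's proof skips (the theorem counts only isolated intersections), and it is sound when phrased, as you also do, through trajectories not entirely contained in $Z$ --- such orbits have $b_\delta(x)=1$ for every $\delta$ and are therefore excluded from $X_0$ --- rather than by calling $Z$ itself invariant, which it need not be.
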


\begin{proof}
First fix a hyperplane $H$. Consider the two surfaces given by the linear polynomials $l(x)\pm\eps$. Since $K$ is compact, $\eps$ is small, and the hyperplane $H$ intersects the compact set $K$, the coefficients of all these polynomials are bounded by a constant that does not depend on the concrete small values of $\delta$ and $\eps$. By the Novikov–Yakovenko theorem \ref{prop:novikov_yakovenko}, the number of zeros of each of these polynomials along the trajectory on an interval of length $T_0$ is bounded by some constant $N$.

It remains to observe that choosing another hyperplane changes nothing essential: the Novikov–Yakovenko theorem is insensitive to such a transition — its estimate depends on the degree of the polynomials and the dimension of the space, and on the compact set the coefficients of the corresponding linear polynomials remain uniformly bounded after normalization.
\end{proof}

\section{Proof of the Main Theorem}
\label{app:proof_main_theorem_2}

\begin{proof}[Proof of Theorem \ref{thm:main_nonsticking_2}]
First we use the set $X_0$ from Proposition \ref{prop:step0_X0_2}. Take arbitrary $x\in X_0$ and an affine hyperplane $H=\{l=0\}$.

\textbf{Idea of the proof.}
We prove by contradiction. Assume that for the chosen $x\in X_0$ and $H$ the fraction of time in $U_\eps(H)$ does not tend to zero as $\eps\to0$. For a chosen small $\delta$ we partition time into blocks. In each block the time in $U_\eps(H)$ is split into two parts:
\[
    \text{(i) inside }W_\delta,
    \qquad
    \text{(ii) outside }W_\delta.
\]
For the first part we already have a small average estimate from Step 0. For the second part we need to show that for sufficiently small $\eps$ the average time outside $W_\delta$ can also be made arbitrarily small.

\textbf{Step 1: choice of $\delta$.}
Assume the contrary: there exist $\gamma>0$ and a sequence $\eps_k\to0$ such that
\begin{equation}\label{eq:contradiction_assumption_2}
    \limsup_{T\to\infty}\frac1T\int_0^T\mathbf 1_{U_{\eps_k}(H)}(\Phi^t x)\dd t\ge\gamma.
\end{equation}
By the definition of $X_0$ we can choose $\delta>0$ so that
\begin{equation}\label{eq:small_W_delta_2}
    b_\delta(x)=\lim_{T\to\infty}\frac1T\int_0^T\mathbf 1_{W_\delta}(\Phi^t x)\dd t<\frac{\gamma^2}{64}.
\end{equation}
For this $\delta$ we further fix the exit time estimate from Lemma \ref{lem:exit} and the constant $c_\delta$ from Lemma \ref{lem:sep}.

\textbf{Step 2: blocks and bad blocks.}
Choose the block length $T_0=1$. Consider a large time $T=mT_0$. The number $T$ is chosen large enough so that the average
\[
    \frac1T\int_0^T\mathbf 1_{W_\delta}(\Phi^t x)\dd t
\]
is sufficiently close to its limit from \eqref{eq:small_W_delta_2}. Partition $[0,mT_0]$ into blocks
\[
    I_j=[(j-1)T_0,jT_0],
    \qquad j=1,\dots,m.
\]
Denote the fraction of time in $W_\delta$ inside a block by
\[
    v_j=\frac1{T_0}\int_{I_j}\mathbf 1_{W_\delta}(\Phi^t x)\dd t,
\]
and the fraction of time in $U_\eps(H)$ by
\[
    w_j=\frac1{T_0}\int_{I_j}\mathbf 1_{U_\eps(H)}(\Phi^t x)\dd t.
\]
We call a block $I_j$ bad if $v_j\ge\gamma/8$.

Since $x\in X_0$, for the chosen $\delta$ we have
\[
    \lim_{m\to\infty}\frac1m\sum_{j=1}^m v_j<\frac{\gamma^2}{64}.
\]
Hence, for all sufficiently large $m$,
\[
    \frac1m\sum_{j=1}^m v_j<\frac{\gamma^2}{64}.
\]
If $m_{\mathrm{bad}}$ is the number of bad blocks, then
\begin{equation}\label{eq:bad_blocks_2}
    \frac{m_{\mathrm{bad}}}{m}\cdot\frac\gamma8
    \le\frac1m\sum_{j=1}^m v_j
    <\frac{\gamma^2}{64},
    \qquad
    \frac{m_{\mathrm{bad}}}{m}<\frac\gamma8.
\end{equation}

\textbf{Step 3: fixing the time grid and the number of intersections $N$.}
By the Novikov–Yakovenko theorem and Lemma \ref{lem:intersect}: for the chosen $T_0$ and sufficiently small $\eps$ there exists a universal constant $N$ such that on each block of length $T_0$ the number of intersections of the trajectory with the surfaces $\{l=\pm\eps\}$ does not exceed $N$.

Consequently, the number of entries into $U_\eps(H)$ on one block is at most $N/2$. Here an entry into $U_\eps(H)$ means the initial moment of each continuous interval of stay in $U_\eps(H)$.

\textbf{Step 4: choice of $T_*$.}
Choose $T_*>0$ so small that
\begin{equation}\label{eq:Tstar_choice_2}
    \frac{N}{2}T_*<\frac\gamma8.
\end{equation}
This is possible because $T_*$ in Lemma \ref{lem:exit} can be taken arbitrarily small. Next fix $\eps\le\eps_*(\delta)$ so that the bound on the number of intersections and the exit time estimate are applicable.

More precisely, the order of choice is as follows: first pick a small range for $\eps$, then obtain the corresponding estimate $N$ from the Novikov–Yakovenko theorem, then choose $T_*$ so that \eqref{eq:Tstar_choice_2} holds, and then, if necessary, decrease the admissible $\eps$ according to Lemma \ref{lem:exit}. The estimate $N$ does not increase from such a decrease.

\textbf{Step 5: good block, case 1.}
Consider a good block, i.e., a block for which $v_j<\gamma/8$. Consider the next entry into $U_\eps(H)$.

First case: the entry occurs outside $W_\delta$. At the moment of entry $x_0\notin W_\delta$, hence by Lemma \ref{lem:exit} the trajectory leaves $U_\eps(H)$ in time at most $T_*$. It does not matter whether it later enters $W_\delta$ during this interval or not: Lemma \ref{lem:exit} gives the answer independently of a subsequent entry into $W_\delta$.

\textbf{Step 6: good block, case 2.}
Second case: the entry occurs inside $W_\delta$. Let the trajectory first stay in $W_\delta$ for time $\tau_i$, then leave $W_\delta$ and, as in the first case, leave $U_\eps(H)$ in time at most $T_*$. Therefore for each such entry we obtain a contribution not exceeding $\tau_i+T_*$.

Summing over all entries, we obtain a sum of the form
\[
    \sum_i(\tau_i+T_*).
\]
There are at most $N/2$ entries, so the contribution of the parts after leaving $W_\delta$ does not exceed $(N/2)T_*$. On a good block the sum of all times $\tau_i$ spent inside $W_\delta$ does not exceed $\gamma/8$. Hence,
\begin{equation}\label{eq:good_block_2}
    w_j\le\frac\gamma8+\frac{N}{2}T_*\le\frac\gamma4.
\end{equation}

\textbf{Preventing fast oscillation.}
To estimate the total time in $U_\eps(H)$ via the number of entries, one must exclude too frequent entries and exits. This is precisely why Lemma \ref{lem:intersect} is used: the number of intersections with the boundaries $\{l=\pm\eps\}$ on each block is bounded above, so the trajectory cannot oscillate arbitrarily many times between the boundaries.

\textbf{Step 7: final assembly.}
For good blocks we obtain $w_j\le\gamma/4$; for bad blocks we use the crude bound $w_j\le1$. Then by \eqref{eq:bad_blocks_2}
\[
    \frac1m\sum_{j=1}^m w_j
    \le\frac{m-m_{\mathrm{bad}}}{m}\cdot\frac\gamma4
    +\frac{m_{\mathrm{bad}}}{m}\cdot1
    <\frac\gamma4+\frac\gamma8
    =\frac{3\gamma}{8}.
\]
Thus the fraction of time spent in $U_\eps(H)$ does not exceed $3\gamma/8$. This contradicts the assumption \eqref{eq:contradiction_assumption_2}. Consequently, the fraction of time in $U_\eps(H)$ tends to zero as $\eps\to0$, i.e., \eqref{eq:main_nonsticking_2} is proved.

It remains to deduce affine IPE. If the scalar estimate \eqref{eq:ipe_scalar_2} does not hold, then there exist $T_j\to\infty$ and normalized pairs $(a_j,b_j)$, $\|a_j\|^2+b_j^2=1$, such that
\[
    \frac1{T_j}\int_0^{T_j}|a_j\cdot \Phi^t(x)+b_j|^2\dd t\to0.
\]
Passing to a subsequence, we assume $(a_j,b_j)\to(a,b)$. If $a=0$, then $|b|=1$, which is impossible. If $a\ne0$, then for the hyperplane $H=\{y:a\cdot y+b=0\}$ property \eqref{eq:main_nonsticking_2} gives a small average fraction of time in $U_\eta(H)$ for sufficiently small $\eta>0$, while outside this neighborhood $|a\cdot \Phi^t(x)+b|\ge \|a\|\eta$. This yields a positive lower bound for the limit inferior of the mean square, a contradiction. Hence \eqref{eq:ipe_scalar_2}, and therefore \eqref{eq:main_ipe_2}, holds.
\end{proof}

\end{appendices}

\end{document}